\DeclareMathOperator{\sym}{sym}
\DeclareMathOperator{\skyw}{skew}
\DeclareMathOperator{\curl}{curl}
\DeclareMathOperator{\Curl}{Curl}
\DeclareMathOperator{\dyw}{div}
\DeclareMathOperator{\Dyw}{Div}
\DeclareMathOperator{\tr}{tr}
\newcommand{\nn}{\nonumber}
\newcommand{\R}{{\mathbb R}}
\newcommand{\di}{{\mathrm d}}
\newcommand{\st}{\mu_{\mathrm{micro}}}
\newcommand{\bigchi}{\raisebox{3pt}{$\chi$}}
\newcommand{\id}{\mathbbm{1}}
\renewcommand{\leq}{\leqslant}
\renewcommand{\geq}{\geqslant}
\let\@fnsymbol\@arabic
\title{{\bf Existence results  for non-homogeneous boundary conditions in the  relaxed micromorphic model}}
\author{Ionel-Dumitrel Ghiba\footnote{Ionel-Dumitrel Ghiba, \   Alexandru Ioan Cuza University of Ia\c si, Department of Mathematics,  Blvd. Carol I, no. 11, 700506 Ia\c si,
		Romania;  Octav Mayer Institute of Mathematics of the
		Romanian Academy, Ia\c si Branch,  700505 Ia\c si; email: dumitrel.ghiba@uaic.ro}	\quad
	and \quad
	Patrizio Neff\,\thanks{Patrizio Neff, \ \  Head of Lehrstuhl f\"{u}r Nichtlineare Analysis und Modellierung, Fakult\"{a}t f\"{u}r Mathematik, Universit\"{a}t Duisburg-Essen, Campus Essen, Thea-Leymann Str. 9, 45127 Essen, Germany, email: patrizio.neff@uni-due.de}\quad
	and \quad Sebastian Owczarek\,\thanks{Sebastian Owczarek, \ \  Faculty of Mathematics and Information Science, Warsaw University of Technology, ul. Koszykowa 75, 00-662 Warsaw, Poland; email: s.owczarek@mini.pw.edu.pl}	}
\date{}
\renewcommand{\theequation}{\thesection.\arabic{equation}}
\newtheorem{tw}{Theorem}[section]
\newtheorem{de}[tw]{Definition}
\newtheorem{col}[tw]{Corollary}
\newtheorem{uwa}[tw]{Remark}
\begin{document}
\maketitle
\bigskip
\begin{abstract}
	
In this paper we use a  property of the extension operator from the space of tangential traces of ${\rm H}({\rm curl};\Omega)$ in the  context of the linear  relaxed micromorphic model, a theory which is recently used to describe the behaviour of some metamaterials showing unorthodox behaviors with respect to elastic wave
propagation.  We show that the new property is important for existence  results of strong solution for non-homogeneous boundary condition in both the dynamic  and the static case.

\bigskip

\noindent\textit{Mathematics Subject Classification}: 
35M33, 	35Q74, 	74H20, 	74M25, 	74B99 
\smallskip

\noindent\textit{Keywords}: 
tangential trace, extension operator, generalised continua, inhomogeneous boundary conditions
\end{abstract}
\tableofcontents
\section{Introduction}

 In some recent papers \cite{GhibaNeffExistence,NeffGhibaMadeoLazar}, the last two authors and their collaborators have studied the existence of the solution of certain initial-boundary value problems in the relaxed micromorphic models \cite{NeffGhibaMicroModel} under homogeneous boundary conditions. These models are constructed as useful alternatives to the classical micromorphic theory \cite{Eringen64,Mindlin64,Eringen99}, which is difficult to be used for practical problems, first of all due to the very large number of constitutive parameters which have to be determined from experiments.  The micromorphic theory \cite{Eringen64,Mindlin64} is a generalised theory of continua which is capable   to describe both macro- and micro-deformation  by  considering that any point of the body is endowed with two   {fields}, a macroscopic vector field $u:\Omega\times[0,T]\rightarrow\mathbb{R}^3$ for the displacement of the macroscopic material points, and a macroscopic tensor field 
$P:\Omega\times[0,T]\rightarrow\mathbb{R}^{3\times3}$ describing the micro-deformation (micro-distortion) of  the substructure of  the material. In this paper we consider $\Omega$ to be a connected, bounded and open subset of $\mathbb{R}^3$ with a  ${\rm C}^{1,1}$  boundary $\partial\Omega$ and $T > 0$ denotes a fixed  time interval. In classical elasticity it is assumed that the body is composed by material points. However, the material points   are  physical representations of some domains for each of which the shapes and the rotations are ignored. In the micromorphic theory, each such ``small'' domain actually   may rotate, stretch, shear and shrink (the micro-distortion). All these informations are stored by the additional tensor $P\in \mathbb{R}^{3\times 3}$, called the  micro-distortion tensor. In the relaxed micromorphic model we use $\Curl P\in \mathbb{R}^{3\times 3}$ as constitutive variable, instead of $\nabla P\in \mathbb{R}^{3\times 3\times 3}$ used in the classical micromorphic theory. This choice has allowed us to reduce the number of constitutive coefficients involved in the model.  In contrast to the classical micromorphic theory, considering a wave ansatz in the partial differential equations arising in the relaxed micromorphic model, some band gaps occur in the diagrams describing the dispersion curves, see \cite{MadeoNeffGhibaW,madeo2016reflection,d2017panorama,madeo2017review,d2017effective,NeffMadeoEidel2019JELast,aivaliotis2019microstructure,barbagallo2019relaxed}. This means that there exists an interval of wave frequencies for which no wave propagation may  occur \cite{MadeoNeffGhibaW,madeo2016complete,madeo2016reflection,madeo2019dispersion}. Such an interesting phenomena is  obtained in experiments when meta-materials are designed and cannot be captured by the classical micromorphic approach \cite{d2017panorama,madeo2017review}. These meta-materials are able to ``absorb'' or even ``bend'' elastic waves with no energetic cost (see e.g. \cite{blanco2000large,liu2000locally}).

Another interesting aspect is that, in the relaxed micromorphic theory,  we were able to prove the existence of the solution of the related initial-boundary value problem with homogeneous boundary conditions  without assuming that the corresponding energy is positive definite in terms of the constitutive variables used in the classical micromorphic model. In the dynamic case and for homogeneous boundary conditions,  the strong solution  $(u,P)$ belongs to ${\rm C}^1([0,T)),{\rm H}_0^1(\Omega))\times {\rm C}^1([0,T)),{\rm H}_0(\Curl;\Omega))$, while in the static case the weak solution belongs to ${\rm H}_0^1(\Omega)\times {\rm H}_0(\Curl;\Omega)$.

However, in contrast to other mechanical models, there are no physical evidences why the boundary conditions (Dirichlet boundary conditions on the displacement $u$ and tangential boundary conditions on the micro-distortion tensor $P$) must be homogeneous.  In order to obtain  existence and uniqueness results  for non-homogeneous boundary conditions, the idea is the same as for classical linear elasticity \cite{valent2013boundary} (see also \cite{hughes1978classical}), i.e. we have to remove the boundary conditions. There are no additional difficulties in order to remove the Dirichlet boundary conditions $u=g$ on $\partial \Omega$ as long as $g\in {\rm H}^{\frac{1}{2}}(\partial\Omega)$, since we use the standard extensions operator from the trace space ${\rm H}^{\frac{1}{2}}(\partial\Omega)$ to ${\rm H}^{1}(\Omega)$ and everything is reduced to the situation when we have homogeneous Dirichlet boundary conditions. However, we do not expect the same when we intend  to remove the tangential boundary condition on the micro-distortion tensor $P_i\times n=G_i$ on $\partial \Omega$, $i=1,2,3$, where $P_i$ are the lines of the tensor $P$. Indeed, using the results established by Alonso and Valli \cite{tantrace}, the functions $G_i$ have to belong at least to the space $
\bigchi_{\partial\Omega}:=\{v\in {\rm H}^{-\frac{1}{2}}(\partial\Omega)\,  :\,\langle v, n\rangle=0\,\,\mathrm{ on }\ \partial \Omega \ \mathrm{and}\,\,\dyw_{\tau} v\in {\rm H}^{-\frac{1}{2}}(\partial\Omega)\}
$, where $\dyw_{\tau} v$ is the tangential divergence of $v\in {\rm H}^{-\frac{1}{2}}(\partial\Omega)$, in order to have a clear meaning of the boundary condition $P_i\times n=G_i$ on $\partial \Omega$  in the sense of ${\rm H}^{-\frac{1}{2}}(\partial\Omega)$. In the framework of the results presented in  \cite{tantrace},  the first idea is to use directly  the extension operator from the space  $\bigchi_{\partial\Omega}$ to ${\rm H}(\Curl;\Omega)$ since we know that for each $G\in \bigchi_{\partial\Omega}$ we have that $\Curl \widetilde{G}\in {\rm L}^2(\Omega)$, where $\widetilde{G}$ is the extension of $G$ in ${\rm H}(\Curl;\Omega)$. But, in order to obtain the existence result in the relaxed micromorphic model (see Section 3), the extension $\widetilde{G}$ of $G$ (which prescribes the values of $P_i\times n$ on the boundary)  has  to be such that $\Curl\Curl \widetilde{G}\in {\rm L}^2(\Omega)$ (or   $\Curl \widetilde{G}\in ({\rm H}_0(\Curl;\Omega))^*$ if we are interested in  a weak solution)  and ${\rm Div} (\sym P), {\rm Div}({\rm skew} P)\in {\rm L}^2(\Omega)$, bioth of which are unknown  from the construction given by Alonso and Valli \cite{tantrace}.     However, the extension operator constructed by Alonso and Valli \cite{tantrace} is such that ${\rm Curl}({\rm Curl}\,\widetilde{G})=0$. If we assume that $G\in \widetilde{\chi}_{\partial\Omega}:= \{G\in {\rm H}^{\frac{1}{2}}(\partial\Omega)\,\,:\,\, \langle G_i, n\rangle=0\  \mathrm{ on } \ \partial \Omega \}$, then from \cite[Theorem 6 of Section 2]{electrobook} we deduce that the extension $\widetilde{G}$ belongs  to ${\rm H}^1(\Omega)$. In Section 3, with the help of this property,  we are able to show an  existence result for the initial-boundary value problem with non-homogeneous boundary condition. We have no information about ${\rm Div} \,P$ in $\Omega$ and even about $P_i\cdot n$, $i=1,2,3$, on $\partial \Omega$ in the considered model. In the last section of the paper, we  show that a similar technique leads  to the existence of the weak solution for the static problem with non-homogeneous boundary condition. In a future contribution we will use these results in order to deal with a local higher regularity result for the micro-distortion tensor $P$ and for the displacement $u$.
 	
\section{The extension operator on the space of tangential traces of ${\rm H}(\curl;\Omega)$}

As usual, the space ${\rm H}^1(\Omega)$ denotes the Hilbert space of vectors from ${\rm L}^2(\Omega)$ such that all  weak partial derivatives of order one belong also to ${\rm L}^2(\Omega)$. The space ${\rm H}^{\frac{1}{2}}(\partial\Omega)$ represents the trace space of ${\rm H}^1(\Omega)$ over $\partial \Omega$, while the space ${\rm H}^1_0(\Omega)$ is the subspace of functions $v\in {\rm H}^1(\Omega)$ which are zero on the boundary $\partial\Omega$ in the sense of ${\rm H}^{\frac{1}{2}}(\partial\Omega)$. 

In order to prove  existence and uniqueness results for  the relaxed micromorphic model  with non-homogeneous boundary conditions we are going to remove the tangential conditions.
Now, let us introduce the space ${\rm H}(\curl;\Omega)$
\begin{equation}
{\rm H}(\curl;\Omega)=\{v \in {\rm L}^2(\Omega)\mid \curl v\in {\rm L}^2(\Omega)\}\,,\quad \curl=\nabla\times\,.
\label{2.1}
\end{equation}
The space ${\rm H}(\curl;\Omega)$ is a Hilbert space with norm
\begin{equation}
\|v\|^2_{{\rm H}(\curl;\Omega)}=\|v\|^2_{{\rm L}^2(\Omega)}+\|\curl v\|^2_{{\rm L}^2(\Omega)}\,.
\label{2.2}
\end{equation}
 By ${\rm H}_0(\curl;\Omega)$ we will denote the closed subspace of vectors $v\in {\rm H}(\curl;\Omega)$ such that $v\times n=0$ on the boundary $\partial\Omega$ in the sense of ${\rm H}^{-\frac{1}{2}}(\partial\Omega)$  (the dual space of ${\rm H}^{\frac{1}{2}}(\partial\Omega)$).\\
Moreover, we consider the set
\begin{equation}
{\rm H}(\dyw;\Omega)=\{v \in {\rm L}^2(\Omega)\mid \dyw v\in {\rm L}^2(\Omega)\}\,,
\label{hdyw}
\end{equation}
which is a Hilbert space with norm
\begin{equation}
\|v\|^2_{{\rm H}(\dyw;\Omega)}=\|v\|^2_{{\rm L}^2(\Omega)}+\|\dyw v\|^2_{{\rm L}^2(\Omega)}\,.
\label{hdywnorm}
\end{equation}
Similarly, ${\rm H}_0(\dyw;\Omega)$ is the subspace of vectors $v\in {\rm H}(\dyw;\Omega)$ satisfying $\langle v, n\rangle=0$ on  $\partial\Omega$ in the sense of ${\rm H}^{-\frac{1}{2}}(\partial\Omega)$.

We recall the definition of the tangential divergence of  vectors from the space ${\rm H}^{\frac{1}{2}}(\partial\Omega)$ (for more details we refer to \cite{tandyw}).
\begin{de}
	Let us assume that $v\in {\rm H}^{-\frac{1}{2}}(\partial\Omega)$ be a vector field satisfying $\langle v, n\rangle=0$ on $\partial \Omega$. The tangential divergence $\dyw_{\tau} v$ of the vector $v$ is the distribution in ${\rm H}^{-\frac{3}{2}}(\partial\Omega)$ which satisfies
	
	\begin{equation}
	[[\dyw_{\tau} v,w]]_{\partial\Omega}=- [v,(\nabla w^{\ast})\big|_{\partial\Omega}]_{\partial\Omega}\quad \forall\   w\in {\rm H}^{\frac{3}{2}}(\partial\Omega)\,,
	\label{2.3}
	\end{equation}
	where $w^{\ast}\in {\rm H}^{2}(\Omega)$ is any extension of $w$ in $\Omega$. Here, $[\cdot,\cdot]_{\partial\Omega}$ denotes the duality pair between the space ${\rm H}^{-\frac{1}{2}}(\partial\Omega)$ and ${\rm H}^{\frac{1}{2}}(\partial\Omega)$.  $[[\cdot,\cdot]]_{\partial\Omega}$ denotes the duality pair between the space ${\rm H}^{-\frac{3}{2}}(\partial\Omega)$ and ${\rm H}^{\frac{3}{2}}(\partial\Omega)$.
\end{de}

It is well known that $(n\times v)\big|_{\partial \Omega}$ belongs to ${\rm H}^{-\frac{1}{2}}(\partial\Omega)$ provided that $v\in {\rm H}(\curl;\Omega)$ (see \cite[p. 34]{Giraultbook}). In fact, if the boundary satisfies some regularity conditions, the tangential trace belongs to a proper subspace of 
${\rm H}^{-\frac{1}{2}}(\partial\Omega)$ defined  by 
\begin{equation}
\bigchi_{\partial\Omega}:=\{v\in {\rm H}^{-\frac{1}{2}}(\partial\Omega) : \langle v, n\rangle=0\,\,\ \mathrm{ on }\  \partial \Omega \ \mathrm{and}\,\,\dyw_{\tau} v\in {\rm H}^{-\frac{1}{2}}(\partial\Omega)\}
\label{2.4}
\end{equation}
equipped with the norm 
\begin{equation}
\|v\|_{\bigchi_{\partial\Omega}}=\|v\|_{{\rm H}^{-\frac{1}{2}}(\partial\Omega)}+\|\dyw_{\tau} v\|_{{\rm H}^{-\frac{1}{2}}(\partial\Omega)}\,.
\label{2.5}
\end{equation}
This result was obtained by Alonso and Valli \cite{tantrace}.
\begin{tw}{\rm \cite[p.~165]{tantrace}}
	\label{lem:2.2}
	Assume that the boundary $\partial\Omega$ is of class ${\rm C}^{1,1}$  or that $\Omega$ is a convex polyhedron. Then  for any vector $v\in {\rm H}(\curl;\Omega)$ the tangential trace $(n\times v)\big|_{\partial\Omega}$ belongs to $\bigchi_{\partial\Omega}$ and there exists a linear and continuous extension operator from the space $\bigchi_{\partial\Omega}$ to ${\rm H}(\curl;\Omega)$.\hfill $\blacksquare$
\end{tw}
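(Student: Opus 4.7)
The plan is to treat the two assertions of the theorem separately. For the trace statement, it is already classical (Girault--Raviart) that $v\mapsto n\times v$ extends continuously from ${\rm H}(\curl;\Omega)$ into ${\rm H}^{-\frac{1}{2}}(\partial\Omega)$, with tangency $\langle n\times v, n\rangle = 0$. What must be verified beyond this is that $\dyw_{\tau}(n\times v) \in {\rm H}^{-\frac{1}{2}}(\partial\Omega)$. I would prove this via the surface identity $\dyw_{\tau}(n\times v) = -\,n\cdot \curl v$ on $\partial\Omega$. For smooth $v$ the identity is a direct computation from $\dyw(a\times b) = b\cdot\curl a - a\cdot \curl b$; the general case follows by density in ${\rm H}(\curl;\Omega)$. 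Since $\curl v \in {\rm L}^2(\Omega)$ is automatically divergence-free, it belongs to ${\rm H}(\dyw;\Omega)$, so its normal trace $n\cdot \curl v$ lies in ${\rm H}^{-\frac{1}{2}}(\partial\Omega)$ with continuous dependence, giving both the membership in $\bigchi_{\partial\Omega}$ and the continuity of the tangential trace operator.

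For the extension, given $G \in \bigchi_{\partial\Omega}$ I would construct $\widetilde G$ in two steps. Set $\mu := \dyw_{\tau} G \in {\rm H}^{-\frac{1}{2}}(\partial\Omega)$; the compatibility $[[\mu, 1]]_{\partial\Omega} = 0$ is automatic because $\partial\Omega$ is a closed surface. Using the ${\rm C}^{1,1}$ (or convex polyhedral) regularity, solve the Neumann problem $\Delta\phi = 0$ in $\Omega$ with $\partial_n\phi = -\mu$ on $\partial\Omega$, producing $\phi \in {\rm H}^1(\Omega)$ satisfying $\|\phi\|_{{\rm H}^1(\Omega)} \leq C\|\mu\|_{{\rm H}^{-\frac{1}{2}}(\partial\Omega)}$; then $\nabla\phi$ is divergence-free in $\Omega$ with prescribed normal trace $-\mu$. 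The second step is a Hodge-type lifting: find $\widetilde G \in {\rm H}(\curl;\Omega)$ such that $\curl \widetilde G = \nabla\phi$ in $\Omega$ and $n\times \widetilde G = G$ on $\partial\Omega$. This can be cast as a mixed variational problem with a gauge choice such as $\dyw \widetilde G = 0$, and its solvability rests on the assumed boundary regularity. Note that the construction automatically yields $\curl(\curl \widetilde G) = \curl(\nabla\phi) = 0$, which is precisely the property of the Alonso--Valli extension flagged in the introduction. Linearity of $G\mapsto \widetilde G$ and the estimate $\|\widetilde G\|_{{\rm H}(\curl;\Omega)} \leq C\|G\|_{\bigchi_{\partial\Omega}}$ then follow by tracking constants through each solve.

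The principal obstacle is the second step of the extension: verifying that the Hodge-type lifting actually reproduces the prescribed tangential trace $G$, and not merely a trace whose tangential divergence agrees with $\mu$. Making this rigorous requires unwinding the duality-based definition of $\dyw_{\tau}$ through extensions from ${\rm H}^{\frac{3}{2}}(\partial\Omega)$ into ${\rm H}^{2}(\Omega)$, as introduced in the preceding definition, and then showing that matching both $\mu$ and the tangential components through the elliptic system pins down $G$ itself as an element of ${\rm H}^{-\frac{1}{2}}(\partial\Omega)$. The ${\rm C}^{1,1}$ or convex polyhedral hypothesis on $\partial\Omega$ is exactly what legitimises those ${\rm H}^{\frac{3}{2}}$-to-${\rm H}^{2}$ extensions and delivers the elliptic regularity needed for both the Neumann problem and the Hodge lifting; without it, $\dyw_{\tau}$ is not even well-defined in the sense used here and the extension map would fail to be continuous.
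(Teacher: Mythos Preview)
Your treatment of the trace assertion is correct and is essentially the standard argument: the identity $\dyw_\tau(n\times v)=-\,n\cdot\curl v$, combined with the normal-trace map from ${\rm H}(\dyw;\Omega)$ into ${\rm H}^{-\frac12}(\partial\Omega)$, is precisely what Alonso--Valli use.

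For the extension, your Step~1 (the Neumann problem for $\phi$) coincides with the first step of the construction the paper recalls. Step~2, however, has a genuine gap when $\Omega$ is not simply connected. You ask for $\widetilde G$ with $\curl\widetilde G=\nabla\phi$, $\dyw\widetilde G=0$, and $n\times\widetilde G=G$. Testing $\curl\widetilde G$ against any Neumann harmonic field $\lambda\in\mathcal{H}(n)=\{\lambda\in {\rm L}^2(\Omega):\curl\lambda=0,\ \dyw\lambda=0,\ \langle\lambda,n\rangle|_{\partial\Omega}=0\}$ and integrating by parts gives
\[
0=\int_\Omega \nabla\phi\cdot\lambda\,\di x=\int_\Omega \curl\widetilde G\cdot\lambda\,\di x=[G,\lambda|_{\partial\Omega}]_{\partial\Omega},
\]
so solvability of your system would force $[G,\lambda|_{\partial\Omega}]_{\partial\Omega}=0$ for every $\lambda\in\mathcal{H}(n)$, a condition that is \emph{not} part of the hypothesis $G\in\bigchi_{\partial\Omega}$. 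The construction the paper reproduces repairs exactly this: to $\nabla w(v)$ one adds the finite-rank correction $\mu(v)=\sum_i [v,\lambda_i|_{\partial\Omega}]_{\partial\Omega}\,\lambda_i$, with $\{\lambda_i\}$ an orthonormal basis of $\mathcal{H}(n)$, and then solves a $\curl$--$\dyw$ variational problem for a potential $r(v)$ in the subspace $W\subset {\rm H}(\curl;\Omega)\cap{\rm H}_0(\dyw;\Omega)$ orthogonal to $\mathcal{H}(n)$, finally setting $\mathcal{R}_{\partial\Omega}(v)=\curl r(v)$. This yields $\curl\mathcal{R}_{\partial\Omega}(v)=\nabla w(v)+\mu(v)$ (so $\curl\curl\mathcal{R}_{\partial\Omega}(v)=0$ still holds, since $\curl\lambda_i=0$) and recovers $n\times\mathcal{R}_{\partial\Omega}(v)=v$ without any topological restriction on $\Omega$. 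Your scheme is fine for simply connected $\Omega$ (in particular for convex polyhedra), but in the general ${\rm C}^{1,1}$ case you must include the harmonic-field term; the ``principal obstacle'' you identified is not a regularity issue but this missing compatibility.
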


Therefore,  for any vector $v\in {\rm H}(\curl;\Omega)$ there exists the constant $C_{\Omega}>0$ (independent of $v$), such that the  inequality 
\begin{equation}
\|v\|_{\bigchi_{\partial\Omega}}\leq C_{\Omega}\,\|v\|_{{\rm H}(\curl;\Omega)}
\label{2.6}
\end{equation}
holds.
Let us denote by $\gamma_{\tau}:{\rm H}(\curl;\Omega)\rightarrow \bigchi_{\partial\Omega}$ the tangential trace mapping, which is continuous. In  \cite{tantracena1,tantracena2,tantracena3} it is proved that the mapping $\gamma_{\tau}$ is also surjective. 

 For better understanding and since it is very useful to visualize a specific step of the construction proposed by Alonso and Valli \cite{tantrace}, we  shortly present the construction of the extension operator.\\[1ex]
$1.$\hspace{1ex} Assume that $v\in\bigchi_{\partial\Omega}$ and first let us consider the following Neumann problem
\begin{equation}
\begin{split}
-\Delta w(v)&=0\qquad\qquad \mathrm{in\,\,} \quad  \Omega\,,\\
\frac{\partial w(v)}{\partial n}&= -\dyw_{\tau} v\quad \mathrm{on\,\,}\quad \partial\Omega\,,\\
\int_{\Omega} w(v)\,\di x&=0 \qquad\qquad \mathrm{in\,\,} \quad  \Omega\,.
\end{split}
\label{2.9}
\end{equation}
Lemma 3.2 in \cite{tantrace} implies that there exists a unique solution $ w(v)\in H^1(\Omega)$ of \eqref{2.9}, which satisfies the estimate
\begin{equation}
\begin{split}
\|w(v)\|_{{\rm H}^1(\Omega)}&\leq C^{\ast}_1\,\|\dyw_{\tau} v\|_{{\rm H}^{-\frac{1}{2}}(\partial\Omega)}\,,
\end{split}
\label{2.19}
\end{equation}
where $C^{\ast}_1>0$ only depends on $\Omega$.\\[1ex]
$2.$\hspace{1ex} Let us introduce the set of Neumann harmonic vectorfields
\begin{equation}
\begin{split}
\mathcal{H}(n):=\{\lambda\in {\rm L}^2(\Omega)\mid \curl \lambda=0,\,\, \dyw\lambda=0,\,\, \langle\lambda, n\rangle\big|_{\partial\Omega}=0\}\,.
\end{split}
\label{2.20}
\end{equation}
It is know that this vector space has a finite dimension $n$ (see for instant \cite{temharmonic,picardharmonic}). Let us set $\{\lambda_i\}_{i=1}^{n}$ an orthonormal basis of $\mathcal{H}(n)$ equipped with the scalar product in ${\rm L}^2(\Omega)$. 

Moreover, define the function 
$
\mu(v)=\sum_{i=1}^{n} [v, \lambda_{i}\big|_{\partial\Omega} ]_{\partial\Omega}\,\lambda_i\,.
$
Finally, set 
$
W=\{\phi\in {\rm H}(\curl;\Omega)\cap {\rm H}_0(\dyw;\Omega)\mid \int_{\Omega}\phi\,\lambda\,\di x=0,\,\,\forall\,\,\lambda\in\mathcal{H}(n)\}
$
and consider the second auxiliary problem: we are looking for a function $r(v)\in W$ such that 
\begin{equation}
\begin{split}
\int_{\Omega}\big(\curl r(v)\curl\phi&+\dyw r(v)\dyw\phi\big)\,\di x=\int_{\Omega}\big(\nabla w(v)\,\phi+\mu(v)\phi\big)\,\di x-[ v, \phi\big|_{\partial\Omega}]_{\partial\Omega}\quad\mathrm{for\, all}\,\, \phi\in W\,.
\end{split}
\label{2.24}
\end{equation}
 Observe that the function $\phi\in W$ and $\lambda_i$ are well defined on $\partial\Omega$ and both belong to ${\rm H}^{\frac{1}{2}}(\Omega)$. This follows from the  property
\begin{equation}
\begin{split}
{\rm H}(\curl;\Omega)\cap {\rm H}_0(\dyw;\Omega)\hookrightarrow {\rm H}^1(\Omega)\,,
\end{split}
\label{2.25}
\end{equation}
which is satisfied for a ${\rm C}^{1,1}$ boundary of $\Omega$ (the general reference is \cite{Giraultbook}).

\noindent
Lemma 3.4 of \cite{tantrace} yields that there exists a unique solution $r(v)\in W$ of  problem \eqref{2.24} which satisfies 
\begin{equation}
\begin{split}
\|\curl r(v)\|_{{\rm L}^2(\Omega)}\leq C_1\big(\|\nabla w(v)\|_{{\rm L}^2(\Omega)}+\|v\|_{{\rm H}^{-\frac{1}{2}}(\partial\Omega)} \big)
\end{split}
\label{2.26}
\end{equation}
for all $v\in\bigchi_{\partial\Omega}$, where the positive constant $C_1$ depends on $\Omega$, only. In addition, the problem \eqref{2.24} is satisfied in a ``strong'' sense (for almost all $x\in\Omega$) and $\dyw r(v)=0$ (see Lemma 3.5 of \cite{tantrace}).\vspace{2mm}\\
$3.$\hspace{1ex} Now we are able to construct an extension operator. Let us define $\mathcal{R}_{\partial\Omega}(v):=\curl r(v)$. We observe that:\\[1ex] 
\textbullet\hspace{1ex} $\curl r(v)\in {\rm L}^2(\Omega)$ (by inequality \eqref{2.26})\,,\\[1ex]
\textbullet\hspace{1ex} $\curl\curl r(v)=(\nabla w(v)+\mu(v))\in {\rm L}^2(\Omega)$\,,\\[1ex]
\textbullet\hspace{1ex} $\mathcal{R}_{\partial\Omega}:\bigchi_{\partial\Omega}\rightarrow {\rm H}(\curl;\Omega)$\,,\\[1ex]
\textbullet\hspace{1ex} $(n\times\mathcal{R}_{\partial\Omega}v)_{\partial\Omega}=v$ on $\partial\Omega$ (\eqref{2.24} is satisfied in the strong sense).\\[1ex]
The construction of the extension operator is finished. 

\begin{col}\label{corn} For all $v\in\bigchi_{\partial\Omega}$ the following is true:
	 $\curl\curl\mathcal{R}_{\partial\Omega}(v)=0\,;$
 $\dyw\,( \mathcal{R}_{\partial\Omega}(v))=0$\,;
 $\mathcal{R}_{\partial\Omega}(v)\in {\rm H}^1(\Omega)$ for all $v\in \widetilde{\chi}_{\partial\Omega}:= \{u\in {\rm H}^{\frac{1}{2}}(\partial\Omega)\,\,;\,\, \langle u, n\rangle=0\}$.
\end{col}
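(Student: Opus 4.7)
The proof should follow directly from the construction recapped in Step~3, so the plan is to unpack the three bullets listed there and invoke one external regularity result for the last assertion. I would treat the three claims separately, in the order they are written, and the only nontrivial step will be the $H^1$ regularity.

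For the first identity, I would start from $\mathcal{R}_{\partial\Omega}(v)=\curl r(v)$ and apply $\curl$ once, using that \eqref{2.24} holds in the strong sense (as noted after the statement): this yields $\curl\mathcal{R}_{\partial\Omega}(v)=\curl\curl r(v)=\nabla w(v)+\mu(v)$ in $L^2(\Omega)$. Taking the curl again, the gradient part is annihilated because $\curl\nabla=0$, and $\curl\mu(v)=0$ since $\mu(v)$ is a finite linear combination of the basis vectors $\lambda_i$ of $\mathcal{H}(n)$, each of which is curl-free by \eqref{2.20}. For the second identity, I would just use the pointwise identity $\dyw\curl\equiv 0$ applied to $r(v)$; distributionally this is immediate and reproduces one of the bullets listed after the construction.

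The main content lies in the third assertion. Here the plan is to apply the embedding-type result \cite[Theorem~6 of Section~2]{electrobook} announced already in the introduction: any vector field in $L^2(\Omega)$ whose curl and divergence lie in $L^2(\Omega)$ and whose tangential trace lies in $H^{1/2}(\partial\Omega)$ belongs to $H^1(\Omega)$ (for a $C^{1,1}$ domain). I would verify the three hypotheses one by one for $\mathcal{R}_{\partial\Omega}(v)$: it lies in $L^2(\Omega)$ because it is the curl of $r(v)\in W\subset H(\curl;\Omega)$; its curl equals $\nabla w(v)+\mu(v)\in L^2(\Omega)$ by the first step of this proof; its divergence vanishes by the second step and hence lies in $L^2(\Omega)$; and the last bullet in Step~3 states $(n\times\mathcal{R}_{\partial\Omega}(v))|_{\partial\Omega}=v$, which by assumption is a member of $\widetilde{\chi}_{\partial\Omega}\subset H^{1/2}(\partial\Omega)$. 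Feeding these four facts into the cited theorem gives $\mathcal{R}_{\partial\Omega}(v)\in H^1(\Omega)$.

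The only step that is not an unwrapping of definitions is the invocation of the $H^1$-regularity theorem, and even that reduces to checking four hypotheses that are all immediately available from Step~3 together with the two identities already proved. There is no hidden difficulty; the proof is really a \emph{corollary} in the strong sense, merely organising the bullet points after \eqref{2.26} into a single statement and upgrading the regularity of the tangential datum.
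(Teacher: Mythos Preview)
Your proposal is correct and follows essentially the same route as the paper's own proof: both unwind Step~3 to obtain $\curl\curl\mathcal{R}_{\partial\Omega}(v)=\curl(\nabla w(v)+\mu(v))=0$ and $\dyw\mathcal{R}_{\partial\Omega}(v)=\dyw\curl r(v)=0$, and then invoke \cite[Theorem~6 of Section~2]{electrobook} for the $H^1$-regularity under the extra assumption $v\in\widetilde{\chi}_{\partial\Omega}$. Your verification of the hypotheses of that theorem is in fact slightly more explicit than the paper's.
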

\begin{proof}
	 From the 3rd step of the construction of the extension operator, we have $\curl\curl\mathcal{R}_{\partial\Omega}(v)=\curl\nabla w(v)+\curl\mu(v)=\curl\nabla w(v)=0\,.$
		The proof of the second property of $\mathcal{R}_{\partial\Omega}(v)$  is immediately.

 Additionally, if we assume that $v\in \widetilde{\chi}_{\partial\Omega}$, then we may use \cite[Theorem 6 of Section 2]{electrobook}: Let  $\Omega $ be a  regular (${\rm C}^{1,1}$) open set in $\mathbb{R}^3$ and let $u\in {\rm L}^2(\Omega)$ with $\curl u\in {\rm L}^2(\Omega), {\rm div} u\in {\rm L}^2(\Omega)$, and $\langle u, n\rangle\in {\rm H}^{\frac{1}{2}}(\partial\Omega)$ (resp. $u\times n\in {\rm H}_t^{\frac{1}{2}}(\partial\Omega)$, where ${\rm H}_t^{\frac{1}{2}}(\partial\Omega)$ is the closure in ${\rm H}^{-\frac{1}{2}}(\partial\Omega)$ of ${\rm L}^2_t(\partial \Omega):=\{u\in {\rm L}^2(\Omega), \ u\times n=0\ \mathrm{ on }\  \partial \Omega\}$). Then, it follows that $u\in {\rm H}^1(\Omega)$.

	Hence, using the above result we deduce $\mathcal{R}_{\partial\Omega}(v)\in {\rm H}^1(\Omega)$.
\end{proof}

\section{The relaxed micromorphic model. Preliminary resuls}
\renewcommand{\theequation}{\thesection.\arabic{equation}}
\setcounter{equation}{0}%

We consider now that a relaxed micromorphic continuum  occupies the domain $\Omega$ and that   the motion of the body is referred to a fixed system of rectangular Cartesian axes $Ox_i$, $(i=1,2,3)$. Throughout this paper {(if we do not specify otherwise)} Latin subscripts take the values $1,2,3$. 
We denote by $\mathbb{R}^{3\times 3}$ the set of real $3\times 3$ matrices.   For all $X\in\mathbb{R}^{3\times3}$ we set ${\rm sym}\, X=\frac{1}{2}(X^T+X)$ and ${\rm skew} X=\frac{1}{2}(X-X^T)$.
The standard Euclidean scalar product on $\mathbb{R}^{3\times 3}$ is given by
$\langle {X},{Y}\rangle_{\mathbb{R}^{3\times3}}=\tr({X Y^T})$, and thus the Frobenius tensor norm is
$\|{X}\|^2=\langle{X},{X}\rangle_{\mathbb{R}^{3\times3}}$. In the following we omit the index
$\mathbb{R}^{3\times3}$. The identity tensor on $\mathbb{R}^{3\times3}$ will be denoted by $\id$, so that
$\tr({X})=\langle{X},{\id}\rangle$. Typical conventions for differential
operations are implied such as comma followed
by a subscript to denote the partial derivative with respect to
the corresponding cartesian coordinate, while $t$ after a comma denotes the partial derivative with respect to the time.

In the rest of the paper,  for vector fields $v$ with components in ${\rm H}^{1}(\Omega)$ 
we consider
$
\nabla\,v:=\big(
({\rm grad}\,  v_1)^T,
({\rm grad}\, v_2)^T,$ $
({\rm grad}\, v_3)^T
\big)^T\in \mathbb{R}^{3\times 3}.
$ For  tensor fields $P$ with rows in ${\rm H}({\rm curl}\,; \Omega)$, i.e.
$
	P=\left(
	P_1^T,    P_2^T,    P_3^T
	\right)^T\in \mathbb{R}^{3\times 3},\, \quad P_i\in {\rm H}({\rm curl}\,; \Omega)\,,
$
	we define
$
	  {\rm Curl}\,P:=\left(
	({\rm curl}\, P_1)^T,
	({\rm curl}\,P_2)^T,
	({\rm curl}\,P_3)^T
	\right)^T\in \mathbb{R}^{3\times 3}.
$
 The corresponding Sobolev spaces for the second order tensor fields $P$ and ${\rm Curl}\, P$   will be denoted by
$ {\rm H}({\rm Curl}\,; \Omega)\, 
$ and $ {\rm H}_0({\rm Curl}\,; \Omega)\, ,
$ respectively.

The partial differential equations in the unknown functions $u:\Omega\times [0,T]\rightarrow \R^3$ and $P:\Omega\times [0,T]\rightarrow \R^{3\times 3}$ associated to the dynamical relaxed micromorphic model \cite{NeffGhibaMicroModel} are
\begin{eqnarray}
\label{1.2}
u_{,tt}&=&\Dyw\big(2\mu_{\rm e}\sym(\nabla u-P)+2\mu_{\rm c}\skyw(\nabla u-P)+\lambda_{\rm e}\tr(\nabla u-P)\id\big)+F\nn\\[1ex]
P_{,tt}&=&2\mu_{\rm e}\sym(\nabla u-P)+2\mu_{\rm c}\skyw(\nabla u-P)+\lambda_{\rm e}\tr(\nabla u-P)\id\nn\\[1ex]
&&-(2\st\sym P+\lambda_{\mathrm{micro}}(\tr P)\id)-\mu_{\rm macro} L_{\rm c}^2\Curl\Curl P+M\,,
\end{eqnarray}
in $\Omega\times (0,T)$, where $F:\Omega\times (0,T)\rightarrow \R^3$ is a given body force and $M:\Omega\times (0,T)\rightarrow \R^{3\times 3}$ is a given body moment tensor.

Here, the constants  $\mu_{\rm e},\lambda_{\rm e},\mu_{\rm c}, \mu_{\rm micro}, \lambda_{\rm micro}$  are constitutive parameters describing the elastic response of the material, while $L_{\rm c}>0$ is the characteristic length of the relaxed micromorphic
model. The limit case $L_{\rm c}\to 0$ corresponds to considering very large specimens of a microstructured meta-material \cite{barbagallo2017transparent}. We suppose that the constitutive parameters are such that 
\begin{align}\label{condpara}
\mu_{\rm e}>0,\quad\quad  2\mu_{\rm e}+3\lambda_{\rm e}>0,\quad\quad  \mu_{\rm c}\geq 0,\quad\quad  \mu_{\rm micro}>0, \quad\quad  2\mu_{\rm micro}+3\lambda_{\rm micro}>0, \qquad \mu_{\rm macro}>0.
\end{align}
These assumptions assure that the corresponding potential energy
\begin{align}
I(\nabla u,P)=\int_{\Omega}\Big(&\mu_{\rm e}\|\sym(\nabla u-P)\|^2+\mu_{\rm c}\|{\rm skew}(\nabla u-P)\|^2+\frac{\lambda_{\rm e}}{2}(\tr(\nabla u-P))^2\\
&+\st\|\sym P\|^2+\frac{\lambda_{\mathrm{micro}}}{2}(\tr P)^2+\frac{\mu_{\rm macro} L_{\rm c}^2}{2}\|\Curl P\|^2\Big)\,\di x\,
\end{align}
is coercive, i.e. there exists a constant $C>0$ such that
\begin{align}
C(\|\nabla u\|^2_{{\rm L}^2(\Omega)}+\|P\|^2_{{\rm H}(\Curl;\Omega)})\leq I(\nabla u,P)
\end{align}
for all $u\in {\rm H}^1_0(\Omega)$ and $P\in{\rm H}_0({\rm Curl}\, ; \Omega)$. 
This coercivity follows even for $\mu_{\rm c}=0$, due to the following result:
\begin{tw}{\rm \cite{NeffPaulyWitsch,NPW2,NPW3,BNPS2}}\ \label{wdn}There exists a positive constant $C$, only depending on $\Omega$, such that for all $P\in{\rm H}_0({\rm Curl}\, ; \Omega)$ the following estimate holds:
	\begin{align*}
\qquad \qquad 	{\| P\|_{{\rm H}(\mathrm{Curl})}^2}:=\| P\|_{{\rm L}^2(\Omega)}^{ {2}}+\| \Curl P\|_{{\rm L}^2(\Omega)}^{ {2}}&\leq C\,(\| {\rm sym} P\|^2_{{\rm L}^2(\Omega)}+\| \Curl P\|^2_{{\rm L}^2(\Omega)}). \qquad \qquad \qquad \qquad \quad\blacksquare
	\end{align*}
\end{tw}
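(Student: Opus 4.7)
The plan is to reduce to the Poincar\'e-type bound
\[
\|P\|_{{\rm L}^2(\Omega)} \leq C\bigl(\|\sym P\|_{{\rm L}^2(\Omega)} + \|\Curl P\|_{{\rm L}^2(\Omega)}\bigr)\qquad\forall\,P\in{\rm H}_0(\Curl;\Omega),
\]
since the squared estimate in the statement follows from it by trivial rearrangement (the $\|\Curl P\|^2$ contribution already appears on both sides). I would argue by contradiction and compactness. Assuming the bound fails, I extract a sequence $\{P_n\}\subset{\rm H}_0(\Curl;\Omega)$ with $\|P_n\|_{{\rm L}^2}=1$, $\|\sym P_n\|_{{\rm L}^2}\to 0$, and $\|\Curl P_n\|_{{\rm L}^2}\to 0$. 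Writing $\skyw P_n = \operatorname{anti}(a_n)$ for the corresponding axial vector $a_n\in{\rm L}^2(\Omega;\R^3)$, the sequence $\{a_n\}$ is bounded in ${\rm L}^2$ with norms bounded away from zero (because $\sym P_n\to 0$ while $\|P_n\|_{{\rm L}^2}=1$).

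The core step is to upgrade these bounds to strong ${\rm L}^2$-compactness of $\{a_n\}$. The key algebraic identity is $\Curl(\operatorname{anti}(a)) = (\dyw a)\id - (\nabla a)^T$. Combined with the decomposition $\Curl(\operatorname{anti}(a_n)) = \Curl P_n - \Curl(\sym P_n)$ it gives $\Curl(\operatorname{anti}(a_n))\to 0$ in ${\rm H}^{-1}(\Omega)$: indeed $\Curl P_n\to 0$ in ${\rm L}^2$ and $\|\Curl(\sym P_n)\|_{{\rm H}^{-1}}\leq C\|\sym P_n\|_{{\rm L}^2}\to 0$. Taking the trace of the identity yields $2\dyw a_n\to 0$ in ${\rm H}^{-1}$, and then the identity itself delivers $\nabla a_n\to 0$ in ${\rm H}^{-1}$. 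Applying Ne\v{c}as's negative-norm estimate $\|u\|_{{\rm L}^2} \leq C(\|u\|_{{\rm H}^{-1}} + \|\nabla u\|_{{\rm H}^{-1}})$ to the differences $a_n - a_m$, together with the compact embedding ${\rm L}^2\hookrightarrow{\rm H}^{-1}$, produces a Cauchy subsequence. Hence $a_n\to a^*$ strongly in ${\rm L}^2$ with $\nabla a^* = 0$, so $a^* = c$ is a constant vector, and consequently $P_n\to\operatorname{anti}(c)$ strongly in ${\rm L}^2$ with $\|\operatorname{anti}(c)\|_{{\rm L}^2}=1$.

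To close the contradiction, observe that strong ${\rm L}^2$-convergence of $P_n$ together with $\Curl P_n\to 0$ implies strong convergence of $P_n$ in ${\rm H}(\Curl;\Omega)$. Since ${\rm H}_0(\Curl;\Omega)$ is closed, the limit $\operatorname{anti}(c)$ satisfies $\operatorname{anti}(c)\times n = 0$ on $\partial\Omega$ in the trace sense. Because $\Omega$ is bounded with ${\rm C}^{1,1}$ boundary, the Gauss image $\{n(x): x\in\partial\Omega\}$ covers all of $S^2$: for every $v\in S^2$ the linear functional $x\mapsto\langle x,v\rangle$ attains its maximum on $\overline\Omega$ at a boundary point whose outward normal is $v$. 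Reading the vanishing tangential trace row-by-row forces each row of the constant matrix $\operatorname{anti}(c)$ to be parallel to every direction in $S^2$, so $\operatorname{anti}(c)=0$ and $c=0$, contradicting $\|\operatorname{anti}(c)\|_{{\rm L}^2}=1$. The principal obstacle I anticipate is the compactness upgrade in the middle paragraph: controlling $\nabla a_n$ only in ${\rm H}^{-1}$ from an ${\rm L}^2$-bound on $\sym P_n$ and $\Curl P_n$ hinges on the fact that $\Curl\operatorname{anti}$ encodes $\nabla$ modulo an algebraic combination with $\dyw a$ that can be reinverted by taking the trace, and on the availability of Ne\v{c}as's lemma on a ${\rm C}^{1,1}$ domain.
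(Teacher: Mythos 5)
Your proposal is correct, and it is necessarily a different route from the paper's, because the paper offers no proof at all: Theorem 3.1 is quoted with a closing $\blacksquare$ and attributed to \cite{NeffPaulyWitsch,NPW2,NPW3,BNPS2}, where the inequality is established via Helmholtz-type decompositions, Maxwell (Poincar\'e-type) estimates for fields with vanishing tangential trace, and Korn's first inequality. Your argument is instead a self-contained contradiction--compactness proof, and its ingredients are all sound: the reduction to the unsquared estimate, the identity $\Curl(\operatorname{anti}(a))=(\dyw a)\,\id-(\nabla a)^T$, the observation that $\Curl(\sym P_n)\to 0$ in ${\rm H}^{-1}$ so that taking the trace recovers $\dyw a_n\to 0$ and then $\nabla a_n\to 0$ in ${\rm H}^{-1}$, the Ne\v{c}as negative-norm estimate combined with the compact embedding ${\rm L}^2(\Omega)\hookrightarrow{\rm H}^{-1}(\Omega)$ to get strong ${\rm L}^2$-compactness of $a_n$, and the closedness of ${\rm H}_0(\Curl;\Omega)$ to put the limit $\operatorname{anti}(c)$ back into the constrained space. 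This correctly identifies the kernel of $(\sym,\Curl)$ as the constant skew-symmetric fields, which is precisely what the tangential boundary condition must eliminate. Two points deserve tightening: (i) the vanishing of the tangential trace of the constant matrix $\operatorname{anti}(c)$ is an a.e.\ statement on $\partial\Omega$, while your Gauss-map argument evaluates $n$ at a single maximizer of $x\mapsto\langle x,v\rangle$; since $\partial\Omega$ is ${\rm C}^{1,1}$ the normal is continuous, so the a.e.\ identity $r_i\times n=0$ upgrades to every boundary point and the conclusion $r_i\times v=0$ for all $v\in S^2$, hence $r_i=0$, follows -- say this explicitly; (ii) connectedness of $\Omega$ (assumed in the paper) is used when you conclude that $a^\ast$ is a single constant. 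What your route buys is an elementary proof using only Rellich, the Ne\v{c}as/Lions lemma on a Lipschitz domain and surjectivity of the Gauss map; what it gives up is any constructive control of the constant $C$, which the decomposition-based proofs in the cited references track more explicitly.
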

The system \eqref{1.2} is considered with the boundary conditions 
\begin{equation}
\begin{split}
u(x,t)=g(x,t)\quad \mathrm{and\,the\, non-homogeneous\,tangential\,condition}\quad P_i(x,t)\times n(x)=G_i(x,t)
\end{split}
\label{1.3}
\end{equation}
for $(x,t)\in \partial\Omega\times [0,T]$, where $n$ is the unit normal vector at the surface $\partial\Omega$, $\times$ denotes the vector product and $P_i$ ($i=1,2,3$) are the rows of $P$. The model is also driven by the following initial conditions 
\begin{equation}
\begin{split}
u(x,0)=u^{(0)}(x)\,,\quad u_{,t}(x,0)=u^{(1)}(x)\,,\quad P(x,0)=P^{(0)}(x)\,,\quad P_{,t}(x,0)=P^{(1)}(x)
\end{split}
\label{1.4}
\end{equation}
for $x\in\Omega$.
\begin{de}
	We say that the initial data $(u^{(0)},u^{(1)},P^{(0)},P^{(1)})$ satisfy the compatibility condition if
	\begin{equation}
	\begin{split}
	u^{(0)}(x)&=g(x,0)\,,\qquad\ \,
	u^{(1)}(x)=g_{,t}(x,0)\,,\\[1ex]
	P^{(0)}_{i}(x)&=G_i(x,0)\,,\qquad
	P^{(1)}_{i}(x)=(G_{i})_{,t}(x,0)
	\end{split}
	\label{comcon}
	\end{equation}
	for $x\in\partial\Omega$ and $i=1,2,3$, where $(G_i)_{,t}$ denotes the time derivative of the function $G_i$.
\end{de}
\noindent The main result of \cite{GhibaNeffExistence} is:
\begin{tw} {\rm (Existence of solution with homogeneous boundary conditions)} 
	Let us assume that the constitutive parameters satisfy  \eqref{condpara} and the initial data are such that  
	\begin{equation}
	(u^{(0)}, u^{(1)}, P^{(0)}, P^{(1)})\in {\rm H}^1_0(\Omega )\times {\rm H}^1_0(\Omega )\times {\rm H}_0(\Curl;\Omega )\times {\rm H}_0(\Curl;\Omega )\,.
	\label{1.6}
	\end{equation}
	Additionally, assume that
$
	\Dyw\big(2\mu_{\rm e}\sym(\nabla u^{(0)}-P^{(0)})+2\mu_{\rm c}\skyw(\nabla u^{(0)}-P^{(0)})+\lambda_{\rm e}\tr(\nabla u^{(0)}-P^{(0)})\id\big)\in {\rm L}^2(\Omega)\,,
	$
$
	\Curl\Curl P^{(0)}\in {\rm L}^2(\Omega)
$
	and
$
	F\in {\rm C}^1([0,T);{\rm L}^2(\Omega))\,,\  M\in {\rm C}^1([0,T);{\rm L}^2(\Omega)).
$
	Then, the system \eqref{1.2} with homogeneous boundary conditions \eqref{1.3} and initial conditions \eqref{1.4} possesses a global in time, unique solution $(u, P)$ with the regularity: for all times $T>0$ 
	\begin{equation}
	\begin{split}
	u\in {\rm C}^1([0,T);{\rm H}^1_0(\Omega ))\,,&\quad u_{,tt}\in {\rm C}((0,T);{\rm L}^2(\Omega ))\,,\\[1ex]
	P\in {\rm C}^1([0,T); {\rm H}_0(\Curl;\Omega ))\quad &\mathrm{and}\quad P_{,tt}\in {\rm C}((0,T);{\rm L}^2(\Omega  ))\,.
	\end{split}
	\label{1.9}
	\end{equation}
	Moreover,
$
	\Dyw\big(2\mu_{\rm e}\sym(\nabla u-P)+2\mu_{\rm c}\skyw(\nabla u-P)+\lambda_{\rm e}\tr(\nabla u-P)\id\big)\in {\rm C}((0,T);{\rm L}^2(\Omega  ))$
	and 
$
	\Curl\Curl P\in {\rm C}((0,T);{\rm L}^2(\Omega  ))\,.
$
	\label{existhomo}
\end{tw}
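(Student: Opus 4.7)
\textbf{Proof plan for Theorem \ref{existhomo}.}

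The natural strategy is to recast the second-order evolution system \eqref{1.2} as a first-order linear Cauchy problem on a suitable Hilbert space and apply the theory of $C_0$-semigroups. I would set $v:=u_{,t}$, $Q:=P_{,t}$, $U:=(u,v,P,Q)$, and work in
\[
\mathcal{H}:={\rm H}^1_0(\Omega;\R^3)\times{\rm L}^2(\Omega;\R^3)\times{\rm H}_0(\Curl;\Omega)\times{\rm L}^2(\Omega;\R^{3\times 3}),
\]
equipped with the inner product generated by the symmetric bilinear form associated to the potential energy $I(\nabla u,P)$ on the first and third components and by the standard ${\rm L}^2$ inner product on the second and fourth. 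Under \eqref{condpara} together with Theorem \ref{wdn} (which handles the delicate case $\mu_{\rm c}=0$), this inner product induces a norm equivalent to the canonical product norm on $\mathcal{H}$. The system \eqref{1.2} then takes the form $U_{,t}=\mathcal{A}U+(0,F,0,M)$ with
\[
\mathcal{A}(u,v,P,Q):=\bigl(v,\,\Dyw\sigma_{\rm e}(u,P),\,Q,\,\sigma_{\rm e}(u,P)-2\st\sym P-\lambda_{\rm micro}(\tr P)\id-\mu_{\rm macro}L_{\rm c}^2\Curl\Curl P\bigr),
\]
where $\sigma_{\rm e}(u,P):=2\mu_{\rm e}\sym(\nabla u-P)+2\mu_{\rm c}\skyw(\nabla u-P)+\lambda_{\rm e}\tr(\nabla u-P)\id$, and the domain
\[
D(\mathcal{A}):=\bigl\{(u,v,P,Q)\in\mathcal{H}\,:\,v\in{\rm H}^1_0(\Omega),\ Q\in{\rm H}_0(\Curl;\Omega),\ \Dyw\sigma_{\rm e}(u,P)\in{\rm L}^2(\Omega),\ \Curl\Curl P\in{\rm L}^2(\Omega)\bigr\}
\]
is tailored so that the hypotheses of the theorem place $U^{(0)}:=(u^{(0)},u^{(1)},P^{(0)},P^{(1)})$ precisely inside $D(\mathcal{A})$.

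The second step is to show that $\mathcal{A}$ generates a $C_0$-group on $\mathcal{H}$. Computing $\langle\mathcal{A}U,U\rangle_{\mathcal{H}}$ by integration by parts, and exploiting $v\in{\rm H}^1_0(\Omega)$ and $Q\in{\rm H}_0(\Curl;\Omega)$ to annihilate the boundary contributions, the cross terms between $u$ and $P$ cancel pairwise and one obtains $\langle\mathcal{A}U,U\rangle_{\mathcal{H}}=0$; in other words, $\mathcal{A}$ is skew-symmetric and the energy is conserved along classical solutions of the unforced problem. For the range condition, given $(f_1,f_2,f_3,f_4)\in\mathcal{H}$ and $\lambda>0$, I would solve $(\lambda-\mathcal{A})U=(f_1,f_2,f_3,f_4)$ by eliminating $v=\lambda u-f_1$ and $Q=\lambda P-f_3$; this reduces the problem to the stationary coupled system
\[
\lambda^2 u-\Dyw\sigma_{\rm e}(u,P)=f_2+\lambda f_1,\ \ \lambda^2 P-\sigma_{\rm e}(u,P)+2\st\sym P+\lambda_{\rm micro}(\tr P)\id+\mu_{\rm macro}L_{\rm c}^2\Curl\Curl P=f_4+\lambda f_3,
\]
posed on ${\rm H}^1_0(\Omega)\times{\rm H}_0(\Curl;\Omega)$. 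The bilinear form corresponding to the left-hand side inherits the coercivity of the potential energy, so Lax--Milgram furnishes a unique weak solution; reading the two equations back gives $\Dyw\sigma_{\rm e}(u,P),\,\Curl\Curl P\in{\rm L}^2(\Omega)$, hence $U\in D(\mathcal{A})$. Skew-symmetry together with the surjectivity of $\lambda-\mathcal{A}$ then yields, by Stone's theorem, a unitary $C_0$-group $\{e^{t\mathcal{A}}\}_{t\in\R}$ on $\mathcal{H}$.

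For the inhomogeneous problem I would invoke Duhamel's formula. Because $F,M\in{\rm C}^1([0,T);{\rm L}^2(\Omega))$, the forcing $(0,F,0,M)$ belongs to ${\rm C}^1([0,T);\mathcal{H})$; combined with $U^{(0)}\in D(\mathcal{A})$, standard regularity for abstract linear Cauchy problems ensures that
\[
U(t)=e^{t\mathcal{A}}U^{(0)}+\int_0^t e^{(t-s)\mathcal{A}}(0,F(s),0,M(s))\,\di s
\]
is a unique classical solution with $U\in{\rm C}^1([0,T);\mathcal{H})\cap{\rm C}([0,T);D(\mathcal{A}))$. Reading this component-wise yields exactly the regularity asserted in \eqref{1.9}, while $\Dyw\sigma_{\rm e},\,\Curl\Curl P\in{\rm C}([0,T);{\rm L}^2(\Omega))$ follow directly from $U(t)\in D(\mathcal{A})$. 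The main obstacle in the entire argument is the range condition: one must establish coercivity of the stationary coupled system \emph{without} assuming positive definiteness of the elastic coupling energy in $(\nabla u,P)$ separately. This is exactly where Theorem \ref{wdn} is decisive, compensating for the missing control on $\|\skyw P\|^2_{{\rm L}^2(\Omega)}$ in the degenerate case $\mu_{\rm c}=0$ by means of the combined $\sym$--$\Curl$ estimate on ${\rm H}_0(\Curl;\Omega)$, and thereby securing the hypotheses of Lax--Milgram.
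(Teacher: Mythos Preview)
The paper does not actually give a proof of Theorem~\ref{existhomo}; it is introduced as ``the main result of \cite{GhibaNeffExistence}'' and used as a black box in the subsequent sections. Your proof plan is precisely the semigroup argument carried out in that reference: first-order reformulation on the energy space, skew-symmetry of $\mathcal{A}$ by integration by parts, surjectivity of $\lambda-\mathcal{A}$ via Lax--Milgram using the coercivity furnished by Theorem~\ref{wdn}, and then the standard regularity for classical solutions of abstract Cauchy problems with ${\rm C}^1$ forcing. So the plan is correct and matches the intended approach.

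One small technical point worth making explicit: to invoke Stone's theorem you need $\mathcal{A}$ to be skew-\emph{adjoint}, not merely skew-symmetric. In your outline you verify skew-symmetry and the surjectivity of $\lambda-\mathcal{A}$ for $\lambda>0$; to close the loop you also need surjectivity of $\lambda+\mathcal{A}$ (equivalently, of $-\lambda-\mathcal{A}$). This is immediate from the same Lax--Milgram step, since after eliminating $v$ and $Q$ only $\lambda^2$ enters the stationary bilinear form and the coercivity argument is unchanged. Alternatively, one can bypass Stone altogether and apply Lumer--Phillips to both $\mathcal{A}$ and $-\mathcal{A}$ to obtain the $C_0$-group directly.
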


\section{Existence of solution with non-homogeneous boundary conditions in dynamics}
\renewcommand{\theequation}{\thesection.\arabic{equation}}
\setcounter{equation}{0}%
	In order to prove existence and uniqueness of solution of the system \eqref{1.2} with non-homogeneous boundary conditions we will rewrite the tangential condition on the micro-distortion tensor and  use the result from the paper \cite{GhibaNeffExistence} together with  the new property of the extension operator presented in the previous section. 
\begin{tw} {\rm (Existence of solution with non-homogeneous boundary conditions)} 
	Let us assume that the constitutive parameters satisfy  \eqref{condpara} and the initial data are such that
\begin{equation}
(u^{(0)}, u^{(1)}, P^{(0)}, P^{(1)})\in {\rm H}^1(\Omega )\times {\rm H}^1(\Omega )\times {\rm H}(\Curl;\Omega )\times {\rm H}(\Curl;\Omega )\,
\label{2.27}
\end{equation}
and that the compatibility condition \eqref{comcon} holds.
Additionally, assume that 
\begin{equation}
\Dyw\big(2\mu_{\rm e}\sym(\nabla u^{(0)}-P^{(0)})+2\mu_{\rm c}\skyw(\nabla u^{(0)}-P^{(0)})+\lambda_{\rm e}\tr(\nabla u^{(0)}-P^{(0)})\id\big)\in {\rm L}^2(\Omega)\,,
\label{2.28}
\end{equation} 
\begin{equation}
\Curl\Curl P^{(0)}\in {\rm L}^2(\Omega)
\label{2.29}
\end{equation}
and
\begin{equation}
F\in {\rm C}^1([0,T);{\rm L}^2(\Omega))\,,\quad M\in {\rm C}^1([0,T);{\rm L}^2(\Omega))\,,
\label{forces1}
\end{equation}
\begin{equation}
g\in {\rm C}^3([0,T);{\rm H}^{\frac{3}{2}}(\partial\Omega))\,,\quad G_i\in {\rm C}^3([0,T);\widetilde{\chi}_{\partial\Omega})\quad i=1,2,3\,.
\label{boundary}
\end{equation}
Then, the system \eqref{1.2} with boundary conditions \eqref{1.3} and initial conditions \eqref{1.4} possesses a global in time, unique solution $(u, P)$ with the regularity: for all times $T>0$ 
\begin{equation}
\begin{split}
u\in {\rm C}^1([0,T);{\rm H}^1(\Omega ))\,,&\quad u_{,tt}\in {\rm C}((0,T);{\rm L}^2(\Omega ))\,,\\[1ex]
P\in {\rm C}^1([0,T); {\rm H}(\Curl;\Omega ))\quad &\mathrm{and}\quad P_{,tt}\in {\rm C}((0,T);{\rm L}^2(\Omega))\,.
\end{split}
\label{2.32}
\end{equation}
Moreover,
$
\Dyw\big(2\mu_{\rm e}\sym(\nabla u-P)+2\mu_{\rm c}\skyw(\nabla u-P)+\lambda_{\rm e}\tr(\nabla u-P)\id\big)\in {\rm C}((0,T);{\rm L}^2(\Omega  ))
$
and 
$
\Curl\Curl P\in {\rm C}((0,T);{\rm L}^2(\Omega  ))\,.
$
\end{tw}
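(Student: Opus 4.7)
The plan is to reduce the non-homogeneous problem to the one already solved in Theorem~\ref{existhomo} by constructing lifts of both boundary data and subtracting them from $(u,P)$. For the displacement datum, since $g\in {\rm C}^3([0,T); {\rm H}^{\frac{3}{2}}(\partial\Omega))$ and ${\rm H}^{\frac{3}{2}}(\partial\Omega)$ is the trace space of ${\rm H}^2(\Omega)$, the standard trace lifting yields $\widetilde g\in {\rm C}^3([0,T); {\rm H}^2(\Omega))$ with $\widetilde g|_{\partial\Omega}=g$. For the tangential datum, the key step is to use the Alonso--Valli extension operator $\mathcal{R}_{\partial\Omega}$: applied row-wise, the hypothesis $G_i\in\widetilde\chi_{\partial\Omega}$ together with Corollary~\ref{corn} produces a tensor field $\widetilde G\in {\rm C}^3([0,T); {\rm H}^1(\Omega))$ satisfying $\widetilde G_i\times n=G_i$ on $\partial\Omega$ and, most importantly, the two pointwise identities $\Curl\Curl \widetilde G=0$ and $\dyw \widetilde G_i=0$ in $\Omega$.

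Next I would set $\bar u:=u-\widetilde g$ and $\bar P:=P-\widetilde G$ and substitute into~\eqref{1.2}. A direct calculation shows that $(\bar u,\bar P)$ satisfies a system of the same form, now with homogeneous boundary data, and with new source terms
\begin{align*}
\bar F&=F-\widetilde g_{,tt}+\Dyw\bigl(2\mu_{\rm e}\sym(\nabla\widetilde g-\widetilde G)+2\mu_{\rm c}\skyw(\nabla\widetilde g-\widetilde G)+\lambda_{\rm e}\tr(\nabla\widetilde g-\widetilde G)\id\bigr),\\
\bar M&=M-\widetilde G_{,tt}+2\mu_{\rm e}\sym(\nabla\widetilde g-\widetilde G)+2\mu_{\rm c}\skyw(\nabla\widetilde g-\widetilde G)+\lambda_{\rm e}\tr(\nabla\widetilde g-\widetilde G)\id\\
&\qquad -\bigl(2\st\sym \widetilde G+\lambda_{\rm micro}(\tr \widetilde G)\id\bigr)-\mu_{\rm macro}L_{\rm c}^2\Curl\Curl \widetilde G.
\end{align*}
Since $\widetilde g\in {\rm H}^2(\Omega)$ controls the divergence of $\nabla\widetilde g$, since $\widetilde G\in {\rm H}^1(\Omega)$ controls row-wise divergences, and since the last term of $\bar M$ vanishes by Corollary~\ref{corn}, every remaining summand lies in ${\rm C}^1([0,T); {\rm L}^2(\Omega))$, matching the source-term regularity required by Theorem~\ref{existhomo}.

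The shifted initial data $\bar u^{(k)}:=u^{(k)}-\partial_t^k\widetilde g(\cdot,0)$ and $\bar P^{(k)}:=P^{(k)}-\partial_t^k\widetilde G(\cdot,0)$ for $k=0,1$ belong to ${\rm H}^1_0(\Omega)$ and ${\rm H}_0(\Curl;\Omega)$ respectively: the vanishing trace of $\bar u^{(k)}$ and the vanishing tangential trace $\bar P^{(k)}_i\times n=0$ both follow from the compatibility hypothesis~\eqref{comcon}. Moreover $\Curl\Curl\bar P^{(0)}=\Curl\Curl P^{(0)}\in {\rm L}^2(\Omega)$ again by Corollary~\ref{corn}, and the analogous ${\rm L}^2$-condition on the divergence term at $t=0$ is inherited from~\eqref{2.28} together with the ${\rm H}^2$ and ${\rm H}^1$ regularity of the two lifts. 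Theorem~\ref{existhomo} then supplies $(\bar u,\bar P)$ with the regularity of~\eqref{1.9}, and setting $u=\bar u+\widetilde g$, $P=\bar P+\widetilde G$ yields a solution of the original problem with the regularity~\eqref{2.32}.

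The crux of the argument, and the single non-routine step, is to secure both $\widetilde G\in {\rm H}^1(\Omega)$ and $\Curl\Curl\widetilde G=0$ from the \emph{same} extension. Without the first, $\bar F$ would generically contain a row-wise divergence that need not lie in ${\rm L}^2$; without the second, $\bar M$ would contain an uncontrolled $\Curl\Curl\widetilde G$ term with no obvious ${\rm L}^2$-bound. The stronger hypothesis $G_i\in\widetilde\chi_{\partial\Omega}$ in~\eqref{boundary}, rather than merely $G_i\in\bigchi_{\partial\Omega}$, is precisely what activates Corollary~\ref{corn} and delivers both properties simultaneously.
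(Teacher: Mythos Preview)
Your proposal is correct and follows essentially the same route as the paper: lift $g$ to $\widetilde g\in {\rm C}^3([0,T);{\rm H}^2(\Omega))$, lift each $G_i$ via the Alonso--Valli operator so that Corollary~\ref{corn} gives $\widetilde G\in {\rm H}^1(\Omega)$ with $\Curl\Curl\widetilde G=0$, subtract, and invoke Theorem~\ref{existhomo} on the shifted homogeneous problem. Your verification of the shifted initial data and of the auxiliary conditions \eqref{2.28}--\eqref{2.29} for $(\bar u^{(0)},\bar P^{(0)})$ is in fact more explicit than the paper's own proof, which passes over these checks in a single sentence.
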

\begin{proof} The uniqueness is trivial, while for the existence we  remove the boundary conditions \eqref{1.3}.
The standard extension theorem (see Adams's book, Theorem 7.53 of \cite{adamssobolev}) yields that there exists an extension $\widetilde{g}\in {\rm C}^3([0,T);{\rm H}^2(\Omega))$ of $g$ in $\Omega$. Additionally, from Corollary  \ref{corn} we obtain that there exists  $\widetilde{G}_i\in {\rm C}^3([0,T);{\rm H}(\curl;\Omega))$ ($i=1, 2, 3$) such that $n\times \widetilde{G}_i=G_i$ on $\partial\Omega$ in the sense of ${\rm H}^{-\frac{1}{2}}(\partial\Omega)$, $\curl\curl\widetilde{G}_i=0$ and $\nabla \widetilde{G}_i\in {\rm H}^{1}(\Omega)$.

Note that the property  $\curl\curl\widetilde{G}_i=0$ is not sufficient to prove the existence result, since we also need to know that ${\rm Div} (\sym P), {\rm Div}({\rm skew} P)\in {\rm L}^2(\Omega)$, which are unknown informations from the construction given by Alonso and Valli \cite{tantrace}.

We set $u=\bar{u}-\widetilde{g}$ and $P=\bar{P}-\widetilde{G}$, where 
$
\widetilde{G}=\left(
\widetilde{G}_1^T\,,\,
\widetilde{G}_2^T\,,\,
\widetilde{G}_3^T
\right)^T\!\!.
$
Observe that, in order to find the solution $(\bar{u},\bar{P})$ of the problem \eqref{1.2}-\eqref{1.4}, we have to find a solution $(u,P)$ of the following system
\begin{equation}
\begin{split}
u_{,tt}=&\Dyw\big(2\mu_{\rm e}\sym(\nabla u-P)+2\mu_{\rm c}\skyw(\nabla u-P)+\lambda_{\rm e}\tr(\nabla u-P)\id\big)+\widetilde{F}\,,\\[1ex]
P_{,tt}=&\,2\,\mu_{\rm e}\sym(\nabla u-P)+2\mu_{\rm c}\skyw(\nabla u-P)+\lambda_{\rm e}\tr(\nabla u-P)\id\\[1ex]
&-(2\st\sym P+\lambda_{\mathrm{micro}}(\tr P)\id)-\mu_{\rm macro} L_{\rm c}^2\Curl\Curl P\,+\widetilde{M}\,,
\end{split}
\label{3.1}
\end{equation}
where
\begin{equation}
\begin{split}
\widetilde{F}=F-\widetilde{g}_{,tt}+\Dyw\big(2\mu_{\rm e}\sym(\nabla\widetilde{g}-\widetilde{G})+2\mu_{\rm c}\skyw(\nabla\widetilde{g}-\widetilde{G})+\lambda_{\rm e}\tr(\nabla\widetilde{g}-\widetilde{G})\id\big)
\end{split}
\label{3.2}
\end{equation}
and
\begin{equation}
\begin{split}
\widetilde{M}=&\,M-\widetilde{G}_{,tt}+2\mu_{\rm e}\sym(\nabla\widetilde{g}-\widetilde{G})+2\mu_{\rm c}\skyw(\nabla\widetilde{g}-\widetilde{G})+\lambda_{\rm e}\tr(\nabla\widetilde{g}-\widetilde{G})\id\\[1ex]
&-(2\st\sym\widetilde{G}+\lambda_{\mathrm{micro}}(\tr\widetilde{G})\id)\,.
\end{split}
\label{3.3}
\end{equation}
Now, the system \eqref{3.1} will be considered with the boundary conditions 
\begin{equation}
\begin{split}
u(x,t)=0\quad \mathrm{and\,the\,homogeneous\,tangential\,condition}\quad P_i(x,t)\times n(x)=0
\end{split}
\label{3.4}
\end{equation}
for $(x,t)\in \partial\Omega\times (0,T)$ and initial conditions
\begin{equation}
\begin{split}
u(x,0)&=\bar{u}^{(0)}(x)-\widetilde{g}(x,0)=u^{(0)}(x)\,,\ \ \ \  \ \ 
u_{,t}(x,0)=\bar{u}^{(1)}(x)-\widetilde{g}_{,t}(x,0)=u^{(1)}(x)\,,\\[1ex]
P(x,0)&=\bar{P}^{(0)}(x)-\widetilde{G}(x,0)=P^{(0)}(x)\,,\ \ \ \ 
P_{,t}(x,0)=\bar{P}^{(1)}(x)-\widetilde{G}_{,t}(x,0)=P^{(1)}(x)
\end{split}
\label{3.5}
\end{equation}
for $x\in\Omega$.
Using the assumptions on the functions $\widetilde{g}$ and $\widetilde{G}$ we conclude that $\widetilde{F}\in {\rm C}^1([0,T);{\rm L}^2(\Omega))$ and $\widetilde{M}\in {\rm C}^1([0,T);{\rm L}^2(\Omega))$. Hence, existence and uniqueness of solution for the system \eqref{3.1} with initial-boundary conditions \eqref{3.4} and \eqref{3.5} immediately follow from Theorem \ref{existhomo} and the proof is completed.\end{proof}
\begin{uwa}
	\begin{enumerate}\item[]
\item The property  $\curl\curl\widetilde{G}_i=0$ of the extension (which is known from the construction given by Alonso and Valli \cite{tantrace})  is  sufficient  to prove the existence of the weak solution corresponding to non-homogeneous boundary conditions. In this case, we should rewrite the system  \eqref{1.2} with the boundary conditions \eqref{1.3} and initial conditions \eqref{1.4} as an abstract Cauchy problem in the ${\rm H}^{-1}(\Omega)\times [{\rm H}_0(\Curl; \Omega)]^*$ setting, where  $[{\rm H}_0(\Curl; \Omega)]^*$ is the dual of ${\rm H}_0(\Curl; \Omega)$, i.e. for $F,M\in {\rm C}^1([0,T);{\rm H}^{-1}(\Omega))\times  {\rm C}^1([0,T);[{\rm H}_0(\Curl; \Omega)]^*)$. Since we do not need to know a priori  that ${\rm Div} (\sym P), {\rm Div}({\rm skew} P)\in {\rm L}^2(\Omega)$,  the problem being formulated in a weak sense, the assumption $G_i\in {\rm C}^3([0,T);{\chi}_{\partial\Omega})$ leads us to an existence result of the weak solution.
\item However, when we consider the problem of higher regularity, the ${\rm L}^2(\Omega)\times {\rm L}^2(\Omega)$ setting is needed, and therefore the assumption $G_i\in {\rm C}^3([0,T);\widetilde{\chi}_{\partial\Omega})$ suffices to have the existence of the strong solution.
\item Our remark that our analysis covers also the situation when the boundary condition imposed on the micro-distortion $P$ is related to the displacement $u$, i.e. a coupling condition of the type
$
P_i\times n=\nabla u\times n \  \text{on}\  \partial \Omega,
$
since this implies
$
G_i=(\nabla g)_i\times n\ \in {\rm C}^3([0,T);\widetilde{\chi}_{\partial\Omega}),$ $ \  \widetilde{\chi}_{\partial\Omega}= \{v\in {\rm H}^{\frac{1}{2}}(\partial\Omega): \langle v, n\rangle=0\}$ as long as  $g\in {\rm C}^3([0,T);{\rm H}^{\frac{3}{2}}(\partial\Omega)).
$
\end{enumerate}
\end{uwa}
\section{Existence of solution with non-homogeneous boundary conditions in the static case}\renewcommand{\theequation}{\thesection.\arabic{equation}}
\setcounter{equation}{0}%
The equilibrium  equations of  the relaxed  micromorphic material in the static case are given by 
\begin{align}
\label{ecst}
0=&\Dyw\big(2\mu_{\rm e}\sym(\nabla u-P)+2\mu_{\rm c}\skyw(\nabla u-P)+\lambda_{\rm e}\tr(\nabla u-P)\id\big)+F\nn\\[1ex]
0=&\,2\,\mu_{\rm e}\sym(\nabla u-P)+2\mu_{\rm c}\skyw(\nabla u-P)+\lambda_{\rm e}\tr(\nabla u-P)\id\nn\\[1ex]
&-(2\st\sym P+\lambda_{\mathrm{micro}}(\tr P)\id)-\mu_{\rm macro} L_{\rm c}^2\Curl\Curl P+M\,,
\end{align}
in $\Omega$, where $F:\Omega\rightarrow \R^3$  and $M:\Omega\rightarrow \R^{3\times 3}$ are given.

Consistently with our previous development, we consider the weaker (compared  to the classical) boundary conditions
\begin{align} \label{bc}
{u}({x})=g(x) \   \text{and the {non-homogeneous tangential condition}}  \  {P}_i({x})\times\,n(x) =G_i(x) \ \  \textrm{for all} \ \ {x}\in\partial \Omega.
\end{align}

Regarding the existence of a weak solution $(u,P)$ of this problem, we have established \cite{NeffGhibaMadeoLazar} the following result:
\begin{tw}
	Assume that
	\begin{itemize}
		\item[i)]  the constitutive coefficients satisfy   the inequalities \eqref{condpara};
		\item[ii)] $F\in L^2(\Omega)$, $M\in L^2(\Omega)$;
		\item[iii)] the boundary conditions are homogeneous:  $g=0$ and $G_i=0$.
	\end{itemize}
	Then there exists one and only one weak  solution $({u},{P})\in{\rm H}^1_0(\Omega)\times {\rm H}_0(\Curl; \Omega)$ of the problem defined by \eqref{ecst} and \eqref{bc}, i.e. which satisfies
	\begin{align}
\int_{\Omega}\Big(&2\,\mu_{\rm e}\langle\sym(\nabla u-P),\sym(\nabla \overline{u}-\overline{P})\rangle+2\,\mu_{\rm c}\langle{\rm skew}(\nabla u-P),{\rm skew}(\nabla \overline{u}-\overline{P})\rangle+\lambda_{\rm e}\tr(\nabla u-P)\tr(\nabla \overline{u}-\overline{P})\notag\\
&\ \ +2\mu_{\rm micro}\langle\sym P,\sym \overline{P}\rangle+{\lambda_{\mathrm{micro}}}\tr P\,\tr\overline{P}+{\mu_{\rm macro} L_{\rm c}^2}\langle\Curl P,\Curl \overline{P}\rangle\Big)\,\di x\\&=	\int_\Omega (\langle F, \overline{u}\rangle+\langle M, \overline{P}\rangle )\,\di x \ \ \qquad  \textrm{for all}\ \ \ (\overline{u},\overline{P})\in{\rm H}^1_0(\Omega)\times {\rm H}_0(\Curl; \Omega).\notag
	\end{align}
\end{tw}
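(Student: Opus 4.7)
The plan is to apply the Lax--Milgram lemma on the product Hilbert space $X := {\rm H}_0^1(\Omega;\mathbb{R}^3) \times {\rm H}_0(\Curl;\Omega)$, equipped with the natural graph norm $\|(u,P)\|_X^2 := \|u\|_{{\rm H}^1(\Omega)}^2 + \|P\|_{{\rm H}(\Curl;\Omega)}^2$. Denote by $a((u,P),(\bar u, \bar P))$ the bilinear form appearing on the left-hand side of the weak formulation and by $\ell(\bar u, \bar P) := \int_\Omega (\langle F, \bar u\rangle + \langle M, \bar P\rangle)\,\di x$ the right-hand side functional. A weak solution is then a pair $(u,P)\in X$ with $a((u,P),(\bar u, \bar P)) = \ell(\bar u, \bar P)$ for every $(\bar u, \bar P)\in X$.

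Continuity of $a$ on $X\times X$ follows term-by-term from the Cauchy--Schwarz inequality, since $\|\sym(\nabla u - P)\|_{{\rm L}^2}$, $\|\skyw(\nabla u - P)\|_{{\rm L}^2}$ and $\|\tr(\nabla u - P)\|_{{\rm L}^2}$ are each controlled by $\|\nabla u\|_{{\rm L}^2} + \|P\|_{{\rm L}^2}$, while $\|\Curl P\|_{{\rm L}^2} \le \|P\|_{{\rm H}(\Curl;\Omega)}$. Continuity of $\ell$ follows from $F,M \in {\rm L}^2(\Omega)$ together with the Poincar\'e inequality on ${\rm H}_0^1(\Omega)$.

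The key step, and the main obstacle, is coercivity: one needs $c>0$ such that $a((u,P),(u,P)) \ge c\,\|(u,P)\|_X^2$ on $X$. This is exactly the energy estimate stated after the definition of $I(\nabla u, P)$ in the excerpt. To establish it, I would first use $\mu_{\rm micro}>0$ and $2\mu_{\rm micro}+3\lambda_{\rm micro}>0$ together with the splitting of $\sym P$ into deviatoric and spherical parts to obtain $2\mu_{\rm micro}\|\sym P\|_{{\rm L}^2}^2 + \lambda_{\rm micro}\|\tr P\|_{{\rm L}^2}^2 \ge c_1 \|\sym P\|_{{\rm L}^2}^2$. Combined with the $\mu_{\rm macro}L_{\rm c}^2\|\Curl P\|_{{\rm L}^2}^2$ contribution, the Neff--Pauly--Witsch inequality (Theorem \ref{wdn}) yields $\|P\|_{{\rm H}(\Curl;\Omega)}^2 \le c_2\,(\|\sym P\|_{{\rm L}^2}^2 + \|\Curl P\|_{{\rm L}^2}^2)$. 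For the displacement, the analogous deviatoric/spherical decomposition under $\mu_{\rm e}>0$ and $2\mu_{\rm e}+3\lambda_{\rm e}>0$ bounds $\|\sym(\nabla u - P)\|_{{\rm L}^2}^2$ from below by the corresponding energy terms; then the triangle inequality $\|\sym \nabla u\|_{{\rm L}^2}^2 \le 2\|\sym(\nabla u - P)\|_{{\rm L}^2}^2 + 2\|\sym P\|_{{\rm L}^2}^2$, Korn's first inequality on ${\rm H}_0^1(\Omega)$, and Poincar\'e recover the full ${\rm H}^1$-norm of $u$. Note that the harder subcase $\mu_{\rm c}=0$ is covered since the skew-symmetric contribution is not required in this chain of estimates, which is precisely why Theorem \ref{wdn} is indispensable.

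With $a$ continuous and coercive and $\ell$ continuous on $X$, the Lax--Milgram lemma delivers a unique $(u,P)\in X$ solving the weak formulation, finishing the proof.
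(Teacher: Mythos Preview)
Your argument is correct and follows exactly the route the paper indicates: the theorem is quoted from \cite{NeffGhibaMadeoLazar} rather than proved in the present paper, but the authors explicitly note that coercivity of $I(\nabla u,P)$ on ${\rm H}_0^1(\Omega)\times {\rm H}_0(\Curl;\Omega)$ holds (even for $\mu_{\rm c}=0$) by the Neff--Pauly--Witsch inequality (Theorem~\ref{wdn}), after which Lax--Milgram is the standard conclusion. Your chain---deviatoric/spherical splitting to absorb possibly negative $\lambda_{\rm e},\lambda_{\rm micro}$, Theorem~\ref{wdn} for $\|P\|_{{\rm H}(\Curl)}$, triangle inequality plus Korn and Poincar\'e for $\|u\|_{{\rm H}^1}$---is precisely the intended mechanism.
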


In the following we give an extension of this result to the case of non-homogeneous boundary conditions.
\begin{tw}
	Assume that
\begin{itemize}
	\item[i)]  the constitutive coefficients satisfy   the inequalities \eqref{condpara};
	\item[ii)] $F\in L^2(\Omega)$, $M\in L^2(\Omega)$;
	\item[iii)] the boundary conditions are such that 	$
	g\in {\rm H}^{\frac{3}{2}}(\partial\Omega)\,,\quad G_i\in\widetilde{\chi}_{\partial\Omega}\quad i=1,2,3\,.
	$
\end{itemize}
Then there exists one and only one weak  solution of the problem defined by \eqref{ecst} and \eqref{bc} of the problem defined by \eqref{ecst} and \eqref{bc}, such that $u=\widetilde{u}+\widetilde{g}$, $P=\widetilde{P}+\widetilde{G}$, where $(\widetilde{u},\widetilde{P})\in{\rm H}^1_0(\Omega)\times {\rm H}_0(\Curl; \Omega)$ is the weak solution of the following problem
\begin{align}
\int_{\Omega}\Big(&2\,\mu_{\rm e}\langle\sym(\nabla \widetilde{u}-\widetilde{P}),\sym(\nabla \overline{u}-\overline{P})\rangle+2\,\mu_{\rm c}\langle{\rm skew}(\nabla \widetilde{u}-\widetilde{P}),{\rm skew}(\nabla \overline{u}-\overline{P})\rangle+\lambda_{\rm e}\tr(\nabla \widetilde{u}-\widetilde{P})\tr(\nabla \overline{u}-\overline{P})\notag\\
&\ \ \ \ \ \ \ +2\mu_{\rm micro}\langle\sym \widetilde{P},\sym \overline{P}\rangle+{\lambda_{\mathrm{micro}}}\tr \widetilde{P}\,\tr\overline{P}+{\mu_{\rm macro} L_{\rm c}^2}\langle\Curl \widetilde{P},\Curl \overline{P}\rangle\Big)\,\di x\\&=	\int_\Omega (\langle \widetilde{F}, \overline{u}\rangle+\langle \widetilde{M}, \overline{P}\rangle )\,\di x \ \ \qquad  \textrm{for all}\ \ \ (\overline{u},\overline{P})\in{\rm H}^1_0(\Omega)\times {\rm H}_0(\Curl; \Omega),\notag 
\end{align}
where
we have set
$
\widetilde{F}=F+\Dyw\big(2\mu_{\rm e}\sym(\nabla\widetilde{g}-\widetilde{G})+2\mu_{\rm c}\skyw(\nabla\widetilde{g}-\widetilde{G})+\lambda_{\rm e}\tr(\nabla\widetilde{g}-\widetilde{G})\id\big)
$
and
$
\widetilde{M}=\,M+2\mu_{\rm e}\sym(\nabla\widetilde{g}-\widetilde{G})+2\mu_{\rm c}\skyw(\nabla\widetilde{g}-\widetilde{G})+\lambda_{\rm e}\tr(\nabla\widetilde{g}-\widetilde{G})\id-(2\st\sym\widetilde{G}+\lambda_{\mathrm{micro}}(\tr\widetilde{G})\id)\,.
$
\end{tw}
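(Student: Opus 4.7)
The plan is to mimic the reduction strategy used for the dynamic case: lift the non-homogeneous boundary data into the domain by suitable extensions, recast the problem as one with homogeneous boundary conditions and modified right-hand sides, then invoke the existence theorem for the homogeneous static problem stated immediately above.

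First I would construct the two extensions. Since $g\in {\rm H}^{\frac{3}{2}}(\partial\Omega)$, the standard trace/extension theorem (Theorem 7.53 of \cite{adamssobolev}) produces $\widetilde{g}\in {\rm H}^2(\Omega)$ with trace $g$ on $\partial\Omega$. For each row $G_i\in\widetilde{\chi}_{\partial\Omega}$, Corollary \ref{corn} supplies $\widetilde{G}_i=\mathcal{R}_{\partial\Omega}(G_i)\in {\rm H}^1(\Omega)$ with $n\times \widetilde{G}_i=G_i$ on $\partial\Omega$ and $\curl\curl \widetilde{G}_i=0$; stacking the rows yields $\widetilde{G}\in {\rm H}^1(\Omega)$. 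Setting $u=\widetilde{u}+\widetilde{g}$ and $P=\widetilde{P}+\widetilde{G}$ and substituting into \eqref{ecst}, the system takes the same algebraic form in $(\widetilde{u},\widetilde{P})$ with the right-hand sides $\widetilde{F},\widetilde{M}$ displayed in the statement; by construction the boundary conditions \eqref{bc} become homogeneous for $(\widetilde{u},\widetilde{P})$.

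The core verification is that $\widetilde{F},\widetilde{M}\in {\rm L}^2(\Omega)$, which is what allows us to feed the reduced problem into the homogeneous existence result. Because $\widetilde{g}\in {\rm H}^2(\Omega)$ and $\widetilde{G}\in {\rm H}^1(\Omega)$, every entry of $\nabla \widetilde{g}-\widetilde{G}$ lies in ${\rm H}^1(\Omega)$, so each of $\Dyw(\sym(\nabla\widetilde{g}-\widetilde{G}))$, $\Dyw(\skyw(\nabla\widetilde{g}-\widetilde{G}))$ and $\Dyw(\tr(\nabla\widetilde{g}-\widetilde{G})\id)$ is in ${\rm L}^2(\Omega)$; combined with the hypothesis $F\in {\rm L}^2(\Omega)$ this gives $\widetilde{F}\in {\rm L}^2(\Omega)$. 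For $\widetilde{M}$ we need in addition only that $\sym\widetilde{G}$ and $\tr\widetilde{G}$ are in ${\rm L}^2(\Omega)$, which is automatic from $\widetilde{G}\in {\rm H}^1(\Omega)\subset {\rm L}^2(\Omega)$. Applying the preceding homogeneous existence theorem to the modified data then yields a unique weak solution $(\widetilde{u},\widetilde{P})\in {\rm H}^1_0(\Omega)\times {\rm H}_0(\Curl;\Omega)$, and $(u,P)=(\widetilde{u}+\widetilde{g},\widetilde{P}+\widetilde{G})$ is the desired weak solution of the original problem. Uniqueness transfers from the homogeneous case, since any two weak solutions of the non-homogeneous problem differ by an element of ${\rm H}^1_0(\Omega)\times {\rm H}_0(\Curl;\Omega)$ that solves the homogeneous problem with zero loading.

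The main obstacle, exactly as in the dynamic case, is that the bare Alonso--Valli extension in ${\rm H}(\curl;\Omega)$ is not enough: without extra regularity on $\widetilde{G}$, the terms $\Dyw(\sym\widetilde{G})$ and $\Dyw(\skyw\widetilde{G})$ hiding inside $\widetilde{F}$ need not belong to ${\rm L}^2(\Omega)$. The strengthened hypothesis $G_i\in \widetilde{\chi}_{\partial\Omega}\subset {\rm H}^{\frac{1}{2}}(\partial\Omega)$ circumvents this precisely through Corollary \ref{corn}, which upgrades $\widetilde{G}$ to ${\rm H}^1(\Omega)$ and supplies exactly the divergence regularity needed to close the argument.
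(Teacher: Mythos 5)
Your proposal is correct and takes essentially the same approach as the paper: lift $g$ to $\widetilde{g}\in {\rm H}^2(\Omega)$ and each $G_i\in\widetilde{\chi}_{\partial\Omega}$ to $\widetilde{G}_i\in {\rm H}^1(\Omega)$ via Corollary \ref{corn}, verify $\widetilde{F},\widetilde{M}\in {\rm L}^2(\Omega)$, and apply the homogeneous static existence theorem to $(\widetilde{u},\widetilde{P})$, with uniqueness transferring as you indicate. You also correctly record the property $\curl\curl\widetilde{G}_i=0$, which is exactly what justifies the absence of a $\Curl\Curl\widetilde{G}$ contribution in $\widetilde{M}$.
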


\begin{proof}
Using  the assumptions on the functions $\widetilde{g}$ and $\widetilde{G}$ we conclude that $\widetilde{F}\in {\rm L}^2(\Omega)$ and $\widetilde{M}\in {\rm L}^2(\Omega)$ as long as $F\in {\rm L}^2(\Omega)$, $M\in {\rm L}^2(\Omega)$. Hence, the existence and uniqueness of the weak solution solution for non-homogeneous boundary conditions   follows immediately.
\end{proof}

Let us now assume that the functions which define the boundary conditions are such that 	$
g\in {\rm H}^{\frac{3}{2}}(\partial\Omega)\,,\quad G_i\in {\chi}_{\partial\Omega}\quad i=1,2,3\,
$. We recall that from the standard trace theorem and Theorem \ref{lem:2.2} we obtain that there exist the functions $\widetilde{g}\in {\rm H}^2(\Omega)$ and $\widetilde{G}_i\in  {\rm H}(\Curl;\Omega)$ such that $\curl\curl \widetilde{G}_i=0$ and $\widetilde{G}\big|_{\partial \Omega}=g$ and $\widetilde{G}_i\times n=G_i$, $i=1,2,3$, in the sense of ${\rm H}^{-\frac{1}{2}}(\partial\Omega)$. In this case, we may rewrite the problem for $F,M\in {\rm H}^{-1}(\Omega)\times [{\rm H}_0(\Curl; \Omega)]^*$. Therefore, we arrive to a similar existence result, where all the derivative in the definitions of $\widetilde{F}$  and  $\widetilde{M}$ are weak derivates.

However, while $ G_i\in {\chi}_{\partial\Omega}, i=1,2,3\,
$ is sufficient to obtain an existence results of the weak solution for appropriate regularities of the load functions, when the higher-regularity problem is considered we expect that the assumption $ G_i\in \widetilde{\chi}_{\partial\Omega}, i=1,2,3\,
$ will play a crucial role.
\section{Acknowledgement}
The  work the second author  was supported by a grant of the Romanian Ministry of Research
and Innovation, CNCS--UEFISCDI, project number
PN-III-P1-1.1-TE-2019-0397, within PNCDI III.

\bibliographystyle{plain} 

\addcontentsline{toc}{section}{References}

\def\cprime{$'$}

\appendix

\section{Linear elastic theory}\setcounter{equation}{0}
In order to put our previous result into proper perspective, we recapitulate in this appendix the well-known stepts for the linear elastic case. The partial differential equations associated to linear elastodynamics are
\begin{equation}
\label{1.2a}
u_{,tt}=\Dyw\big(2\mu_{\rm e}\sym(\nabla u)+\lambda_{\rm e}\tr(\nabla u)\id\big)+F,
\end{equation}
in $\Omega\times (0,T)$, where $F:\Omega\times (0,T)\rightarrow \R^3$ is a given body force. 
We adjoin to \eqref{1.2a} the boundary conditions 
\begin{equation}
\begin{split}
u(x,t)=g(x,t)\quad \mathrm{(only\ \  Dirichlet)}
\end{split}
\label{1.3a}
\end{equation}
for $(x,t)\in \partial\Omega\times [0,T]$ 
and the  following initial conditions 
$
u(x,0)=u^{(0)}(x)\,,\ u_{,t}(x,0)=u^{(1)}(x),
$
for $x\in\Omega$.
	We say that the initial data $(u^{(0)},u^{(1)})$ satisfy the compatibility condition if
	$
	u^{(0)}(x)=g(x,0)\,,\ 
	u^{(1)}(x)=g_{,t}(x,0)\,,
	$
	for $x\in\partial\Omega$.

It is known that, if $f\in {\rm L}^2(\Omega)$  for      homogeneous boundary conditions then there exists a unique solution $u$ having the regularity $$	u\in {\rm C}^1([0,T);{\rm H}^1_0(\Omega )), \qquad u_{,tt}\in C((0,T);{\rm L}^2(\Omega )) .$$

Now, considering non-homogeneous boundary conditions we are looking for a solution 
$$	u\in {\rm C}^1([0,T);{\rm H}^1(\Omega )), \qquad u_{,tt}\in C((0,T);{\rm L}^2(\Omega )),$$
such that 
$u=g$ on $\partial \Omega.$

The boundary condition is well defined if $g$ belongs to ${\rm H}^{1/2}(\partial\Omega)$, since we know that for each $u\in {\rm H}^1(\Omega )$ we may speak about its trace on the boundary which belongs to ${\rm H}^{1/2}(\partial\Omega)$. Moreover, we know that this trace operator is surjective in the sense that for each ${\rm H}^{1/2}(\partial\Omega)$ there exists an extension $\widetilde{g}\in {\rm H}^1(\Omega )$ such that $\widetilde{g}\big|_{\partial \Omega}=g$ in the trace sense.

Therefore, in principle the solution of the inhomogeneous problem is written as $u=\widetilde{u}+\widetilde{g}$, where $\widetilde{g}$ is the extension of $g\in {\rm H}^{3/2}(\partial\Omega)\subset {\rm H}^{1/2}(\partial\Omega)$ to $ {\rm H}^2(\Omega )$ (see the existence result by Lasiecka, Lions and Triggiani \cite[Theorem 2.2]{lasiecka1986non} obtained for general linear hyperbolic equations and also Theorem 1.4 from \cite{hughes1978classical}) and $\widetilde{u}\in {\rm H}_0^1(\Omega)$. 

To be more precise, $\widetilde{u}\in H_0^1(\Omega)$ is solution of the following equation
\begin{eqnarray}
\label{1.2aa}
\widetilde{u}_{,tt}&=&\Dyw\big(2\mu_{\rm e}\sym(\nabla \widetilde{u})+\lambda_{\rm e}\tr(\nabla \widetilde{u})\id\big)+\underbrace{F-\widetilde{g}_{,tt}+\Dyw\big(2\mu_{\rm e}\sym(\nabla \widetilde{g})+\lambda_{\rm e}\tr(\nabla \widetilde{g})\id\big)\nn}_{\textrm{the new right hand side which must belong to}\ \ {\rm L}^2(\Omega)},
\end{eqnarray}
and
$
\widetilde{u}\big|_{\partial \Omega}=u\big|_{\partial \Omega}-\widetilde{g}\big|_{\partial \Omega}=g-g=0.
$
Why do we need $g\in {\rm H}^{3/2}(\partial\Omega)$? Since when we are looking for a solution in the form $u=\widetilde{u}+\widetilde{g}$, with $\widetilde{u}\in {\rm H}_0^1(\Omega)$ a solution of the PDE, we have to apply the differential operator from the right hand side to $\widetilde{g}$ and the obtained result has to belong to ${\rm L}^2(\Omega)$. This is possible, since where exists an extension of $g\in {\rm H}^{3/2}(\partial\Omega)$ which is in ${\rm H}^2(\Omega)$.

Therefore, using the trace operator and the extension to ${\rm H}^1(\Omega)$, for classical elasticity, the case of non-homogeneous boundary conditions may be reduced to homogeneous boundary conditions.

\end{document}